\begin{document}

\newtheorem{theorem}{Theorem}
\newtheorem{lemma}[theorem]{Lemma}
\newtheorem{claim}[theorem]{Claim}
\newtheorem{cor}[theorem]{Corollary}
\newtheorem{conj}[theorem]{Conjecture}
\newtheorem{prop}[theorem]{Proposition}
\newtheorem{definition}[theorem]{Definition}
\newtheorem{question}[theorem]{Question}
\newtheorem{example}[theorem]{Example}
\newcommand{\hh}{{{\mathrm h}}}
\newtheorem{remark}[theorem]{Remark}

\numberwithin{equation}{section}
\numberwithin{theorem}{section}
\numberwithin{table}{section}
\numberwithin{figure}{section}

\def\sssum{\mathop{\sum\!\sum\!\sum}}
\def\ssum{\mathop{\sum\ldots \sum}}
\def\iint{\mathop{\int\ldots \int}}

\newcommand{\diam}{\operatorname{diam}}

\def\squareforqed{\hbox{\rlap{$\sqcap$}$\sqcup$}}
\def\qed{\ifmmode\squareforqed\else{\unskip\nobreak\hfil
\penalty50\hskip1em \nobreak\hfil\squareforqed
\parfillskip=0pt\finalhyphendemerits=0\endgraf}\fi}

\newfont{\teneufm}{eufm10}
\newfont{\seveneufm}{eufm7}
\newfont{\fiveeufm}{eufm5}
%
%
\newfam\eufmfam
     \textfont\eufmfam=\teneufm
\scriptfont\eufmfam=\seveneufm
     \scriptscriptfont\eufmfam=\fiveeufm
%
%
\def\frak#1{{\fam\eufmfam\relax#1}}

\newcommand{\bflambda}{{\boldsymbol{\lambda}}}
\newcommand{\bfmu}{{\boldsymbol{\mu}}}
\newcommand{\bfxi}{{\boldsymbol{\eta}}}
\newcommand{\bfrho}{{\boldsymbol{\rho}}}

\def\eps{\varepsilon}

\def\fK{\mathfrak K}
\def\fT{\mathfrak{T}}
\def\fL{\mathfrak L}
\def\fR{\mathfrak R}

\def\fA{{\mathfrak A}}
\def\fB{{\mathfrak B}}
\def\fC{{\mathfrak C}}
\def\fM{{\mathfrak M}}
\def\fS{{\mathfrak  S}}
\def\fU{{\mathfrak U}}

\def\T {\mathsf {T}}
\def\Tor{\mathsf{T}_d}
\def\Tore{\widetilde{\mathrm{T}}_{d} }

\def\sM {\mathsf {M}}

\def\Kmnd{\cK_d(m,n)}
\def\Kmnp{\cK_p(m,n)}
\def\Kmnq{\cK_q(m,n)}

\def \balpha{\bm{\alpha}}
\def \bbeta{\bm{\beta}}
\def \bgamma{\bm{\gamma}}
\def \bdelta{\bm{\delta}}
\def \bzeta{\bm{\zeta}}
\def \blambda{\bm{\lambda}}
\def \bchi{\bm{\chi}}
\def \bphi{\bm{\varphi}}
\def \bpsi{\bm{\psi}}
\def \bnu{\bm{\nu}}
\def \bomega{\bm{\omega}}

\def \beta{\bm{\eta}}

\def \bell{\bm{\ell}}

\def\eqref#1{(\ref{#1})}

\def\vec#1{\mathbf{#1}}

\newcommand{\abs}[1]{\left| #1 \right|}

\def\Zq{\mathbb{Z}_q}
\def\Zqx{\mathbb{Z}_q^*}
\def\Zd{\mathbb{Z}_d}
\def\Zdx{\mathbb{Z}_d^*}
\def\Zf{\mathbb{Z}_f}
\def\Zfx{\mathbb{Z}_f^*}
\def\Zp{\mathbb{Z}_p}
\def\Zpx{\mathbb{Z}_p^*}
\def\cM{\mathcal M}
\def\cE{\mathcal E}
\def\cH{\mathcal H}

\def\le{\leqslant}

\def\ge{\geqslant}

\def\sfB{\mathsf {B}}
\def\sfC{\mathsf {C}}
\def\L{\mathsf {L}}
\def\FF{\mathsf {F}}

\def\sE {\mathscr{E}}
\def\sS {\mathscr{S}}

\def\cA{{\mathcal A}}
\def\cB{{\mathcal B}}
\def\cC{{\mathcal C}}
\def\cD{{\mathcal D}}
\def\cE{{\mathcal E}}
\def\cF{{\mathcal F}}
\def\cG{{\mathcal G}}
\def\cH{{\mathcal H}}
\def\cI{{\mathcal I}}
\def\cJ{{\mathcal J}}
\def\cK{{\mathcal K}}
\def\cL{{\mathcal L}}
\def\cM{{\mathcal M}}
\def\cN{{\mathcal N}}
\def\cO{{\mathcal O}}
\def\cP{{\mathcal P}}
\def\cQ{{\mathcal Q}}
\def\cR{{\mathcal R}}
\def\cS{{\mathcal S}}
\def\cT{{\mathcal T}}
\def\cU{{\mathcal U}}
\def\cV{{\mathcal V}}
\def\cW{{\mathcal W}}
\def\cX{{\mathcal X}}
\def\cY{{\mathcal Y}}
\def\cZ{{\mathcal Z}}
\newcommand{\rmod}[1]{\: \mbox{mod} \: #1}

\def\cg{{\mathcal g}}

\def\vy{\mathbf y}
\def\vr{\mathbf r}
\def\vx{\mathbf x}
\def\va{\mathbf a}
\def\vb{\mathbf b}
\def\vc{\mathbf c}
\def\vh{\mathbf h}
\def\vk{\mathbf k}
\def\vm{\mathbf m}
\def\vz{\mathbf z}
\def\vu{\mathbf u}
\def\vv{\mathbf v}

\def\e{{\mathbf{\,e}}}
\def\ep{{\mathbf{\,e}}_p}
\def\eq{{\mathbf{\,e}}_q}

\def\Tr{{\mathrm{Tr}}}
\def\Nm{{\mathrm{Nm}}}

 \def\SS{{\mathbf{S}}}

\def\lcm{{\mathrm{lcm}}}

 \def\0{{\mathbf{0}}}

\def\({\left(}
\def\){\right)}
\def\l|{\left|}
\def\r|{\right|}
\def\fl#1{\left\lfloor#1\right\rfloor}
\def\rf#1{\left\lceil#1\right\rceil}
\def\sumstar#1{\mathop{\sum\vphantom|^{\!\!*}\,}_{#1}}

\def\mand{\qquad \mbox{and} \qquad}

\def\tblue#1{\begin{color}{blue}{{#1}}\end{color}}




\hyphenation{re-pub-lished}

\mathsurround=1pt

\def\bfdefault{b}

\def \F{{\mathbb F}}
\def \K{{\mathbb K}}
\def \N{{\mathbb N}}
\def \Z{{\mathbb Z}}
\def \Q{{\mathbb Q}}
\def \R{{\mathbb R}}
\def \C{{\mathbb C}}
\def\Fp{\F_p}
\def \fp{\Fp^*}

 \def \xbar{\overline x}

\title[Hausdorff dimension of large values  of Weyl sums]{Hausdorff dimension of the large values  of Weyl sums}

 \author[C. Chen] {Changhao Chen}

\address{Department of Pure Mathematics, University of New South Wales,
Sydney, NSW 2052, Australia}
\email{changhao.chenm@gmail.com}

 \author[I. E. Shparlinski] {Igor E. Shparlinski}

\address{Department of Pure Mathematics, University of New South Wales,
Sydney, NSW 2052, Australia}
\email{igor.shparlinski@unsw.edu.au}

%

\begin{abstract}  The authors   have recently obtained a   lower bound of the Hausdorff dimension for the sets of vectors  $(x_1, \ldots, x_d)\in [0,1)^d$ 
with large Weyl sums, namely of vectors for which 
$$
\left| \sum_{n=1}^{N}\exp\(2\pi i \(x_1 n+\ldots +x_d n^{d}\)\) \right| \ge  N^{\alpha}
$$
for infinitely many integers $N \ge 1$. 
 Here we obtain an upper bound for the Hausdorff dimension of these exceptional sets.
\end{abstract}

\keywords{Weyl sums, Hausdorff dimension}
\subjclass[2010]{11L15, 28A78, 28A80}
 
\maketitle

\section{Introduction}

\subsection{Motivation and background}

For an integer $d \geqslant 2$, let $\Tor = (\R/\Z)^d$ be the  $d$-dimensional unit torus. 

For  a vector $\vx = (x_1, \ldots, x_d)\in \Tor$ and $N \in\N$, we consider the exponential   
sums
$$
S_d(\vx; N)=\sum_{n=1}^{N}\e\(x_1 n+\ldots +x_d n^{d} \), 
$$
which  are commonly called {\it Weyl sums\/}, where   throughout  the paper we denote $\e(x)=\exp(2 \pi i x)$.

The authors~\cite[Appendix~A]{ChSh1} have  shown  that  for almost all $\vx\in \Tor$ (with respect to Lebesgue measure) one has   
 \begin{equation}
\label{eq:M-R}
  |S_d(\vx; N)| \leqslant  N^{1/2+o(1)} \text{ as } N\rightarrow \infty, 
\end{equation}
see also~\cite[Theorem~2.1]{ChSh2} for a different proof.  It is very natural to  conjecture that the exponent $1/2$ is the best possible value, and indeed for $d=2$  the  authors~\cite{ChSh3} proved that for almost all $(x_1, x_2)\in \T_2$  we have  
$$
\limsup_{N\rightarrow \infty}\left|S_2((x_1, x_2); N)\right| N^{-1/2} (\log\log N)^{-1/6}=\infty.
$$ 
 However there seems to be  no results in this direction for $d\ge 3$.


For integer $d\ge 2$  and $0<\alpha<1$ our main  object is defined as  
$$
\cE_{d, \alpha}=\{\vx\in \Tor:~|S_d(\vx; N)|\geqslant N^{\alpha} \text{ for infinitely many } N\in \N\}.
$$ 
We can restate the bound~\eqref{eq:M-R} in the following way: for any $\alpha\in (1/2, 1)$ the set $\cE_{d, \alpha}$ is of Lebesgue measure zero. 
Here we are mostly interested in the structure of the sets $\cE_{d, \alpha}$, and for convenience we 
call the set $\cE_{ d, \alpha}$ the exceptional set for any integer $d\ge 2$ and  each $0<\alpha<1$. 

The authors~\cite{ChSh1} show that in terms of the Baire categories and Hausdorff dimension the exceptional sets $\cE_{d, \alpha}$ are quite massive. 
By~\cite[Theorem~1.3]{ChSh1},  for each $0<\alpha<1$ and integer $d\ge 2$ the  set $\Tor\setminus \cE_{d, \alpha}$ is of the first \emph{Baire category}. Alternatively,  this is equivalent to the statement that the complement 
$\Tor\setminus \Xi_{d}$ to the set 
\begin{equation}
\label{eq:xid}
\begin{split}
\Xi_{d}= \bigl\{\vx\in \Tor:~\forall \varepsilon>0, |S_d(\vx; N)| &\geqslant N^{1-\varepsilon}\\
& \text{for infinitely many } N\in \N \bigr\}
\end{split}
\end{equation}
is of first category, see~\cite{ChSh1} for more details and reference therein. For the Hausdorff dimension it is shown in~\cite[Theorem~1.5]{ChSh1} that for any $d\ge 2$ and $0<\alpha<1$ one has 
\begin{equation}
\label{eq:dim}
\dim \cE_{d, \alpha}\ge \xi(d, \alpha)>0
\end{equation}
with some explicit constant $\xi(d, \alpha)$.  
 
 We remark that the authors~\cite[Corollary~1.9]{ChSh2} have  obtained a nontrivial upper bound for the Hausdorff  dimension of $\cE_{d, \alpha}$ for some $\alpha$, however the bounds there  are not fully explicit and  do not cover the whole range $1/2<\alpha<1$. 
 

Here  we obtain the nontrivial upper bound of $\dim \cE_{d, \alpha}$ for all $1/2<\alpha<1$ and $d\ge 2$. 
 

On the other hand, we note that we do not have any 
plausible conjecture about the exact value  of  the  Hausdorff dimension of $\cE_{d, \alpha}$.

\subsection{Main results}

For  $\cA\subseteq \R^d$, the $s$-dimension {\it Hausdorff measure\/} of $\cA$ is defined as 
$$
\mathcal{H}^{s}(\cA)=\lim_{\delta\rightarrow 0} \mathcal{H}^{s}_{\delta}(\cA),
$$
where
$$
\mathcal{H}^{s}_{\delta}(\cA)=\inf\left \{ \sum_{i=1}^{\infty}\(\diam\cU_i\)^{s}:~\cA\subseteq \bigcup_{i=1}^{\infty} \cU_i \text{ and } \diam\cU_i \le \delta,\ i\in \N \right \}.
$$
The {\it Hausdorff dimension\/} of $\cA$ is defined as 
\begin{align*}
\dim \cA& =\inf\{s>0:~ \mathcal{H}^{s}(\cA)=0\}\\
&=\sup\{s>0:~ \mathcal{H}^{s}(\cA)=\infty\}.
\end{align*}
 We refer to~\cite{Falconer} for more details and properties  of Hausdorff dimension.

For integer $d\ge 2$ and $0<\alpha<1$  denote 
\begin{equation}
\label{eq:def u} 
\mathfrak{u}(d, \alpha)=\min_{k=0, \ldots, d-1} \frac{(2d^{2}+4d)(1-\alpha)+k(k+1)}{4-2\alpha+2k}.
\end{equation}

\begin{theorem}
\label{thm:dim}
For any integer $d\ge 2$ and $0<\alpha<1$ we have 
$$
 \dim \cE_{d, \alpha}\le \mathfrak{u}(d, \alpha).
$$ 
\end{theorem}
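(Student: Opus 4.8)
The plan is to bound $\dim\cE_{d,\alpha}$ from above by constructing an efficient cover of $\cE_{d,\alpha}$, combining the Vinogradov mean value theorem (in its now-established optimal form) at the critical exponent $s=d(d+1)/2$ with the elementary Lipschitz behaviour of $S_d(\vx;N)$ in $\vx$. First I would replace $\cE_{d,\alpha}$ by a limsup of dyadic maximal large-value sets. Since $\vx\in\cE_{d,\alpha}$ forces $|S_d(\vx;N)|\ge N^{\alpha}$ for some $N\in[2^{\ell},2^{\ell+1})$ for infinitely many $\ell$, one has $\cE_{d,\alpha}\subseteq\limsup_{\ell\to\infty}\cB_{\ell}$, where
\[
\cB_{\ell}=\Bigl\{\vx\in\Tor:\ \max_{2^{\ell}\le N<2^{\ell+1}}|S_d(\vx;N)|\ge \tfrac12\,2^{\ell\alpha}\Bigr\}.
\]
It then suffices to cover each $\cB_{\ell}$ by cubes and to show that, for every $s>\mathfrak{u}(d,\alpha)$, the resulting $s$-dimensional contents are summable in $\ell$; this gives $\mathcal{H}^{s}(\cE_{d,\alpha})=0$ and hence the theorem.

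For the covering I would work at the anisotropic scale dictated by the partial derivatives of $S_d$. Writing $X=2^{\ell}$ and using $|\partial_{x_j}S_d(\vx;N)|\le 2\pi\sum_{n\le N}n^{j}\le 2\pi (2X)^{j+1}$ for $N<2X$, a box $B$ with side lengths $\sigma_j=c\,X^{\alpha-1-j}$ ($j=1,\dots,d$, with $c=c(d)$ small) changes every $|S_d(\cdot;N)|$, and hence $\max_{N}|S_d(\cdot;N)|$, by at most $\tfrac14 X^{\alpha}$. Thus on any box meeting $\cB_{\ell}$ one has $\max_{N}|S_d(\vx;N)|\ge\tfrac14 X^{\alpha}$ throughout, so that $\int_{B}\max_{N}|S_d(\vx;N)|^{2s}\,d\vx\gg X^{2s\alpha}\,|B|$. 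As the grid boxes are essentially disjoint, invoking a maximal form of the mean value theorem at $s=d(d+1)/2$, namely $\int_{\Tor}\max_{N<2X}|S_d(\vx;N)|^{2s}\,d\vx\ll X^{2s-d(d+1)/2+o(1)}=X^{s+o(1)}$, bounds the number of boxes meeting $\cB_{\ell}$ by
\[
\frac{X^{s+o(1)}}{X^{2s\alpha}\,|B|}= X^{\,d(d+2)(1-\alpha)+o(1)},
\]
the exponent collapsing to exactly $d(d+2)(1-\alpha)$ after inserting $|B|\asymp X^{d\alpha-d-d(d+1)/2}$ and $s=d(d+1)/2$.

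The remaining point is to convert this anisotropic cover into a cube cover, and this is where the parameter $k$ and the precise shape of $\mathfrak{u}(d,\alpha)$ enter. For each $k\in\{0,\dots,d-1\}$ I would subdivide every box $B$ into cubes of common side $\sigma_{k+1}=c\,X^{\alpha-k-2}$: since $\sigma_1>\dots>\sigma_d$, only the $k$ longest directions need splitting, producing $\prod_{j=1}^{k}(\sigma_j/\sigma_{k+1})=X^{k(k+1)/2}$ cubes per box. Hence $\cB_{\ell}$ is covered by $X^{\,d(d+2)(1-\alpha)+k(k+1)/2+o(1)}$ cubes of diameter $\asymp X^{\alpha-k-2}$, and the corresponding $s$-content is $\asymp X^{\,d(d+2)(1-\alpha)+k(k+1)/2-s(k+2-\alpha)+o(1)}$. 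Summing the resulting geometric series in $\ell$ (with $X=2^{\ell}$) converges precisely when
\[
s>\frac{2d(d+2)(1-\alpha)+k(k+1)}{2(k+2-\alpha)}=\frac{(2d^{2}+4d)(1-\alpha)+k(k+1)}{4-2\alpha+2k},
\]
so optimising over $k$ yields $\dim\cE_{d,\alpha}\le\mathfrak{u}(d,\alpha)$.

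I expect the main obstacle to be the maximal mean value estimate used in the second step. The clean limsup bookkeeping—avoiding an extra loss of a factor $X^{1-\alpha}$, or even $X$, coming from the many integers $N$ inside a dyadic block—genuinely requires controlling $\max_{N<2X}|S_d(\vx;N)|$ rather than a single partial sum, since the naive stability bound $|S_d(\vx;2^{\ell})-S_d(\vx;N)|\le N-2^{\ell}\le 2^{\ell}$ overwhelms the target $X^{\alpha}$ for $\alpha<1$. I would deduce the maximal estimate from the established optimal mean value bound by a standard completion and summation-by-parts argument, at the cost of only an $X^{o(1)}$ factor; verifying that this passage is legitimate at the critical exponent, and that the box count really collapses to $X^{d(d+2)(1-\alpha)+o(1)}$, is the technical heart of the argument.
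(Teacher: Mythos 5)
Your proposal is correct and follows essentially the same route as the paper: dyadic limsup sets, stability on anisotropic boxes with sides $\asymp X^{\alpha-1-j}$, a box count of $X^{d(d+2)(1-\alpha)+o(1)}$ at the critical exponent $s(d)=d(d+1)/2$, and subdivision of each box into cubes of side $\asymp X^{\alpha-k-2}$ (costing a factor $X^{k(k+1)/2}$), yielding exactly the convergence condition that defines $\mathfrak{u}(d,\alpha)$. The maximal mean value estimate you correctly identify as the technical heart is precisely what the paper supplies via the completion method (Lemma~\ref{lem:control}, which bounds $\max_{M\le N}|S_d(\vx;M)|$ by the weighted sum $W_{d}(\vx;N)$ with weights $\ll\log N$) combined with Wooley's weighted mean value theorem (Lemma~\ref{lem:Wooley}), so your proposed summation-by-parts/completion passage is legitimate and loses only $X^{o(1)}$.
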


For  $d\ge 2$ and any $1/2<\alpha<1$  an elementary calculation gives  that $\mathfrak{u}(d, \alpha)<d$. In fact by
taking  $ k=d-1$ in~\eqref{eq:def u}   we derive 
$$
 \mathfrak{u}(d, \alpha) \le d -\frac{d(d+1)(2 \alpha -1)}{2(d+1 -\alpha)}. 
$$

Thus,   we have

\begin{cor}
For any integer $d\ge 2$ and any $1/2<\alpha<1$ we have 
$
\dim \cE_{d, \alpha} <d.
$ 
\end{cor}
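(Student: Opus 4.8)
The plan is to deduce the corollary directly from Theorem~\ref{thm:dim}, which already provides $\dim \cE_{d, \alpha}\le \mathfrak{u}(d, \alpha)$. All the analytic content is carried by that theorem, so it suffices to verify the purely elementary strict inequality $\mathfrak{u}(d, \alpha)<d$ for every integer $d\ge 2$ and every $\alpha\in(1/2, 1)$. Since $\mathfrak{u}(d, \alpha)$ is defined in~\eqref{eq:def u} as a minimum over $k=0, \ldots, d-1$, I would bound it from above by evaluating the single term corresponding to the endpoint $k=d-1$; this is exactly the estimate recorded immediately after Theorem~\ref{thm:dim}.

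First I would substitute $k=d-1$ into~\eqref{eq:def u}. The denominator becomes $4-2\alpha+2(d-1)=2(d+1-\alpha)$, and the numerator is $(2d^{2}+4d)(1-\alpha)+(d-1)d$. A short expansion shows that this term, and the closed-form expression $d-\dfrac{d(d+1)(2\alpha-1)}{2(d+1-\alpha)}$, share the common denominator $2(d+1-\alpha)$ and the common numerator $3d(d+1)-2d(d+2)\alpha$, so they coincide. Because the minimum over $k$ is at most the value at $k=d-1$, this yields
$$
\mathfrak{u}(d, \alpha)\le d-\frac{d(d+1)(2\alpha-1)}{2(d+1-\alpha)}.
$$

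Finally I would inspect the sign of the correction term under the hypothesis $1/2<\alpha<1$. Here $2\alpha-1>0$ precisely because $\alpha>1/2$, the factor $d(d+1)$ is positive for $d\ge 2$, and the denominator satisfies $2(d+1-\alpha)>0$ since $\alpha<1$. Hence the subtracted quantity is strictly positive, giving $\mathfrak{u}(d, \alpha)<d$, and combining this with Theorem~\ref{thm:dim} delivers $\dim \cE_{d, \alpha}<d$.

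As for difficulty, there is essentially no genuine obstacle, since the deep input is Theorem~\ref{thm:dim}. The only point requiring care is the bookkeeping in the $k=d-1$ evaluation, and in particular isolating the factor $(2\alpha-1)$: this is what makes the role of the hypothesis $\alpha>1/2$ transparent, as the strictness of the corollary is exactly the statement that this factor is positive. (The hypothesis $\alpha<1$ enters only to keep the denominator positive.)
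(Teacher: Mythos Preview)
Your proposal is correct and follows essentially the same approach as the paper: the paper likewise invokes Theorem~\ref{thm:dim}, substitutes $k=d-1$ into~\eqref{eq:def u} to obtain $\mathfrak{u}(d,\alpha)\le d-\dfrac{d(d+1)(2\alpha-1)}{2(d+1-\alpha)}$, and reads off the strict inequality from $2\alpha-1>0$.
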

Furthermore taking, for example, $k =0$ in~\eqref{eq:def u} we obtain 
$$
\dim\cE_{d, \alpha}\le \mathfrak{u}(d, \alpha)\le \frac{(2d^{2}+4d)(1-\alpha)}{4-2\alpha}.
$$

We note that although the lower  bound~\eqref{eq:dim} and the upper bound of Theorem~\ref{thm:dim} are 
of very different magnitude with respect to $d$, however for $\alpha\to 1$ they give the same rate of convergency to zero  which 
of  order $1-\alpha$. 
More precisely, the explicit formula for $ \xi(d, \alpha)$ from~\cite{ChSh1} and the formula~\eqref{eq:def u} yield
$$
c_1(d) \le \liminf_{\alpha \to 1}\, (1-\alpha)^{-1}\dim \cE_{d, \alpha}\le\limsup_{\alpha \to 1} \, (1-\alpha)^{-1}\dim \cE_{d, \alpha}\le c_2(d)
$$
for two positive constants $c_1(d), c_2(d)$ depending only on $d$. In fact for $d=2$ we have
$$ 
c_1(2) =  3
\mand 
c_2(2) = 8,
$$
while for $d\ge 3$ we have 
$$ 
c_1(d) =  \max_{\nu =1, \ldots, d} \min\left\{ \frac{1}{\nu} ,  \frac{2}{2d-\nu} \right \}
\mand 
c_2(d) = d^2 + 2d.
$$

In particular, we have

\begin{cor}
For any integer $d\ge 2$, if $\alpha\to 1$ then $\dim \cE_{d, \alpha} \rightarrow 0$. 
\end{cor}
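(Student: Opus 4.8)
The plan is to bound $\dim\cE_{d,\alpha}$ from above by producing, for every $s'>\mathfrak u(d,\alpha)$, an economical covering showing that $\mathcal H^{s'}(\cE_{d,\alpha})=0$. Writing $A_N=\{\vx\in\Tor:~|S_d(\vx;N)|\ge N^{\alpha}\}$, the set $\cE_{d,\alpha}=\bigcap_{M\ge1}\bigcup_{N\ge M}A_N$ is a $\limsup$ set, so by the Hausdorff--Cantelli principle it suffices to cover each $A_N$ in such a way that the resulting series of $s'$-th powers of diameters converges (the diameters tend to $0$). A naive term-by-term covering loses a factor equal to the length of the range of $N$, and would only yield a bound tending to $1$ (not $0$) as $\alpha\to1$; to avoid this I would group $N$ into dyadic blocks $T\le N<2T$ and cover the whole block at once through the maximal super-level set
$$
A_T^{**}=\Bigl\{\vx\in\Tor:~\max_{T\le N<2T}|S_d(\vx;N)|\ge T^{\alpha}/2\Bigr\},
$$
which contains $\bigcup_{T\le N<2T}A_N$ since $N^{\alpha}\ge T^{\alpha}$ there.

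Next I would cover $A_T^{**}$ by anisotropic boxes tuned to a parameter $k\in\{0,\dots,d-1\}$. The trivial derivative bounds $|\partial_{x_j}S_d(\vx;N)|\le 2\pi\sum_{n\le N}n^{j}\ll N^{j+1}$ show that a box with side $h_j$ in the $j$-th coordinate changes every $|S_d(\cdot;N)|$, uniformly for $N<2T$, by at most $\ll\sum_j T^{j+1}h_j$. Choosing
$$
h_j=c\,\min\bigl\{T^{\,\alpha-(k+2)},\,T^{\,\alpha-(j+1)}\bigr\}
$$
with a small constant $c$ keeps this variation below $T^{\alpha}/2$, so any grid box meeting some $A_N$ with $T\le N<2T$ lies entirely inside $A_T^{**}$. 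Here the cap $T^{\alpha-(k+2)}$ is the largest side and fixes the diameter at $\asymp T^{-\beta_k}$ with $\beta_k=k+2-\alpha$. Since these boxes tile $\Tor$ and the occupied ones are disjoint and contained in $A_T^{**}$, their number is at most $\operatorname{Leb}(A_T^{**})/\prod_j h_j$.

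For the count I would invoke a maximal form of Vinogradov's mean value theorem, namely $\int_{\Tor}\max_{T\le N<2T}|S_d(\vx;N)|^{2s}\,d\vx\ll T^{o(1)}J_{s,d}(2T)$, where $J_{s,d}(N)=\int_{\Tor}|S_d(\vx;N)|^{2s}\,d\vx$, together with the sharp bound $J_{s,d}(N)\ll N^{2s-d(d+1)/2+o(1)}$ at the critical exponent $s=d(d+1)/2$. Markov's inequality gives $\operatorname{Leb}(A_T^{**})\ll T^{-2s\alpha+o(1)}J_{s,d}(2T)$, and evaluating $\prod_j h_j$ then yields a box count $\ll T^{\gamma_k+o(1)}$ with
$$
\gamma_k=(2s+d)(1-\alpha)+\tfrac{k(k+1)}{2}=d(d+2)(1-\alpha)+\tfrac{k(k+1)}{2}.
$$
Summing $T^{\gamma_k}\bigl(T^{-\beta_k}\bigr)^{s'}$ over dyadic $T$ converges precisely when $s'>\gamma_k/\beta_k$, and a direct simplification shows $\gamma_k/\beta_k$ is exactly the $k$-th term in the definition of $\mathfrak u(d,\alpha)$. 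Taking the best $k$ then gives $\dim\cE_{d,\alpha}\le\min_k\gamma_k/\beta_k=\mathfrak u(d,\alpha)$.

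I expect the main obstacle to be the maximal mean value estimate: the crude bound $\max_N\le\sum_N$ is far too lossy, and it is precisely the saving of the block length $T$ that makes the final bound tend to $0$ as $\alpha\to1$. I would establish it by a completion or Sobolev argument in the variable $N$, expressing $|S_d(\vx;N)|^{2s}$ through $|S_d(\vx;2T)|^{2s}$ and a telescoping correction controlled by single summands, and then integrating. The remaining ingredients---the disjointness reasoning behind $\#\{\text{boxes}\}\le\operatorname{Leb}(A_T^{**})/\prod_j h_j$, the computation of $\prod_j h_j$, and the verification that $s=d(d+1)/2$ minimizes $\gamma_k$ (so that the critical case of Vinogradov's theorem is the relevant one)---are routine.
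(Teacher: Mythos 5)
Your proposal is correct and is essentially the paper's own argument for the underlying Theorem~\ref{thm:dim} (from which the corollary is immediate since, taking $k=0$, $\mathfrak{u}(d,\alpha)\le (2d^2+4d)(1-\alpha)/(4-2\alpha)\to 0$ as $\alpha\to 1$): your maximal mean value estimate over a dyadic block is exactly what the paper obtains via the completion bound of Lemma~\ref{lem:control} combined with Wooley's weighted mean value theorem (Lemma~\ref{lem:Wooley}), and your perturbation, Markov, and box-counting steps mirror Lemmas~\ref{lem:cont} and~\ref{lem:counting}. Your covering by boxes with sides capped at $T^{\alpha-(k+2)}$ is just a reformulation of the paper's singular value function $\varphi_{k,t}$ (cf.\ Remark~\ref{rem:rem}), and your exponents $\gamma_k/\beta_k$ agree exactly with the terms in~\eqref{eq:def u}.
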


From the definition of $\Xi_d$, see~\eqref{eq:xid}, we have  $\Xi_d\subseteq \cE_{d, \alpha}$ for any $0<\alpha<1$.  Therefore 

\begin{cor}
For and integer $d\ge 2$, we have
$\dim \Xi_d =0$.
\end{cor}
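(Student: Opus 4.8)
The plan is to deduce the vanishing of $\dim \Xi_d$ directly from Theorem~\ref{thm:dim}, using the inclusion $\Xi_d \subseteq \cE_{d, \alpha}$ noted just above and the freedom to let $\alpha \to 1$. First I would spell out why this inclusion holds for \emph{every} $\alpha \in (0,1)$: if $\vx \in \Xi_d$, then by the defining property~\eqref{eq:xid} the inequality $|S_d(\vx; N)| \ge N^{1-\eps}$ has infinitely many solutions $N \in \N$ for each $\eps > 0$. Choosing $\eps = 1 - \alpha$, which is positive precisely because $\alpha < 1$, gives $|S_d(\vx; N)| \ge N^{\alpha}$ for infinitely many $N$, so that $\vx \in \cE_{d, \alpha}$.

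Granting this, the monotonicity of Hausdorff dimension under inclusion yields $\dim \Xi_d \le \dim \cE_{d, \alpha}$, and Theorem~\ref{thm:dim} bounds the right-hand side by $\mathfrak{u}(d, \alpha)$. The key point is that the left-hand side is independent of $\alpha$, so the inequality $\dim \Xi_d \le \mathfrak{u}(d, \alpha)$ holds simultaneously for all $\alpha \in (0,1)$ and may be optimised by passing to the infimum over $\alpha$. Taking $k = 0$ in~\eqref{eq:def u} already gives $\mathfrak{u}(d, \alpha) \le (2d^2+4d)(1-\alpha)/(4-2\alpha)$, whose right-hand side tends to $0$ as $\alpha \to 1$; this is exactly the content of the preceding corollary. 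Hence $\dim \Xi_d \le \inf_{0<\alpha<1} \mathfrak{u}(d, \alpha) = 0$, and since Hausdorff dimension is nonnegative we conclude $\dim \Xi_d = 0$.

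This argument is entirely routine, and I do not anticipate any genuine obstacle: all the substance lies in Theorem~\ref{thm:dim}, while the corollary requires only the elementary observation that $\Xi_d$ sits inside every $\cE_{d, \alpha}$ with $\alpha < 1$, so that the $\alpha$-uniform dimension bound can be driven to zero. The sole point meriting a moment's care is that this inclusion must be verified for all such $\alpha$ (not merely for one fixed value), which is what licenses the limit $\alpha \to 1$.
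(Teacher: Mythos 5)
Your argument is correct and is precisely the paper's route: the paper likewise deduces the corollary from the inclusion $\Xi_d \subseteq \cE_{d,\alpha}$ for every $0<\alpha<1$ (immediate from~\eqref{eq:xid} by taking $\eps = 1-\alpha$) together with the fact that $\dim \cE_{d,\alpha} \to 0$ as $\alpha \to 1$, which is the content of the preceding corollary derived from Theorem~\ref{thm:dim}. Your write-up merely makes explicit the monotonicity of Hausdorff dimension and the passage to the infimum over $\alpha$, which the paper leaves implicit.
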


\section{Preliminaries}
 \subsection{Notation and conventions}

Throughout the paper, the notation $U = O(V )$, 
$U \ll V$ and $ V\gg U$  are equivalent to $|U|\leqslant c|V| $ for some positive constant $c$, 
which throughout the paper may depend on the degree $d$ and occasionally on the small real positive 
parameters $\varepsilon$ and $\delta$.

For any quantity $V> 1$ we write $U = V^{o(1)}$ (as $V \to \infty$) to indicate a function of $V$ which 
satisfies $|U| \le V^\eps$ for any $\eps> 0$, provided $V$ that is large enough.   
 
 We use $\# \cX$ to denote the cardinality of set $\cX$.  
 
 We always identify $\Tor$ with half-open unit cube $[0, 1)^d$, in particular we
 naturally associate Euclidean norm  $\|x\|$ with points $x \in \Tor$. Moreover we always assume that $d\ge 2$.

We say that some property holds for almost all $\vx \in \Tor$ if it holds for a set 
 $\cX \subseteq \Tor$ of  Lebesgue measure  $\lambda(\cX) = 1$. 
 
We always keep the subscript $d$ in notations for our main objects of interest such as 
$\cE_{ d, \alpha}$, $S_d(\vx; N)$ and $\Tor$, but sometimes suppress
it in auxiliary quantities.

\subsection{Mean value theorems} 

The  {\it Vinogradov mean value theorem\/}  in the currently known form, 
due to Bourgain, Demeter and Guth~\cite{BDG} for $d \geqslant 4$
and Wooley~\cite{Wool2} for $d=3$,  asserts that,
$$
\int_{\Tor} |S_d(\vx; N)|^{2s(d)}d\vx \leqslant  N^{s(d)+o(1)},
$$
where $s(d) = d(d+1)/2$.  We will use the following  result due to Wooley~\cite[Theorem~1.1]{Wool5}, which extends the bound to the Weyl sums with weights. 

\begin{lemma}   
\label{lem:Wooley}
For  any sequence of complex  weights $\vec{a} = (a_n)_{n=1}^\infty$, and 
any integer $N\ge 1$,   we have the upper bound  
$$
\int_{\Tor} |\sum_{n=1}^N a_n\e(x_1n+\ldots+x_dn^d) |^{2s(d) } d\vx\le N^{o(1)} \(\sum_{n=1}^{N}|a_n|^2\)^{s(d)}.
$$
\end{lemma}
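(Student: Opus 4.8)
At the critical exponent $2s(d) = d(d+1)$, the asserted inequality is precisely the \emph{discrete restriction estimate} for the moment curve $(n, n^2, \ldots, n^d)$, and my plan is to derive it from the $\ell^2$-decoupling theorem of Bourgain, Demeter and Guth~\cite{BDG} --- the same mechanism that underlies the unweighted mean value bound quoted above. It is worth stressing at the outset that the weighted form does \emph{not} follow from the unweighted theorem as a black box: taking $a_n = \mathbf{1}_{n \in \cA}$ for a set $\cA \subseteq \{1, \ldots, N\}$ reduces the claim to the bound $N^{o(1)} (\#\cA)^{s(d)}$ for the mean value over the \emph{arbitrary} subset $\cA$, which is strictly stronger than, and not implied by, the full-interval bound $N^{s(d)+o(1)}$. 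The additional strength --- allowing general amplitudes and the $\ell^2$ norm on the right --- is exactly what decoupling supplies.

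\textbf{Key steps.} First I would write
$$
f(\vx) = \sum_{n=1}^N a_n e_n(\vx), \qquad e_n(\vx) = \e(x_1 n + \cdots + x_d n^d),
$$
and record that the $n$-th term carries the integer frequency $(n, n^2, \ldots, n^d)$, a point of the moment curve; at the scale $\delta = 1/N$ governing the decoupling partition, distinct values of $n$ fall into distinct caps, so this single-frequency splitting is the finest $\ell^2$-decomposition of $f$. Second, I would apply the $\ell^2$-decoupling inequality at the critical exponent $p = 2s(d) = d(d+1)$ to $f$ with this decomposition. Since $|e_n(\vx)| = 1$ identically we have $\| a_n e_n \|_{L^p(\Tor)} = |a_n|$, so decoupling yields
$$
\left( \int_{\Tor} |f(\vx)|^{p} \, d\vx \right)^{1/p} \le N^{o(1)} \left( \sum_{n=1}^N \| a_n e_n \|_{L^p(\Tor)}^2 \right)^{1/2} = N^{o(1)} \left( \sum_{n=1}^N |a_n|^2 \right)^{1/2}.
$$
Raising this to the power $p = 2s(d)$ gives exactly the assertion of the lemma.

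\textbf{The main obstacle.} The decoupling theorem of~\cite{BDG} is naturally stated for functions on $\R^d$ with frequencies in an anisotropic neighbourhood of the moment curve, integrated over a ball of radius $\approx N^d$ against a smooth weight $w_B$. The crux of the argument is therefore the \emph{transference} of that continuous inequality to the exact mean value over $\Tor$ with frequencies pinned to the integer points $(n, \ldots, n^d)$: one must verify that the scale $\delta = 1/N$ places a single integer point in each cap, periodise correctly, and absorb the removal of the weight $w_B$ into the factor $N^{o(1)}$. This passage is by now routine --- it is the very same one used to deduce the unweighted Vinogradov mean value theorem from decoupling --- but it is where all the technical care resides. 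An alternative that avoids the continuous-to-discrete transference altogether is the route actually taken in~\cite{Wool5}: one runs Wooley's nested efficient congruencing while carrying the amplitudes $a_n$ through the iteration, the $\ell^2$ structure on the right-hand side emerging from the $\ell^2$-based counting performed at each congruencing step.
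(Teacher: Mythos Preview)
The paper gives no proof of this lemma at all: it simply quotes it as~\cite[Theorem~1.1]{Wool5}, i.e.\ Wooley's weighted mean value theorem proved by nested efficient congruencing. Your proposal is correct, and it takes the other standard route --- deducing the discrete restriction estimate at the critical exponent $p=d(d+1)$ from the $\ell^2$-decoupling theorem of Bourgain--Demeter--Guth~\cite{BDG}, then transferring from the weighted $\R^d$ formulation to the torus. You rightly flag that the weighted inequality is genuinely stronger than the unweighted one and cannot be obtained from it as a black box, and you correctly locate the only real work in the continuous-to-discrete transference (single integer per cap at scale $\delta=1/N$, periodisation, removal of the weight $w_B$), all of which is standard and absorbs into the $N^{o(1)}$ loss. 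You even mention the Wooley route as an alternative, which is precisely what the paper invokes. In short: nothing is wrong here; you have supplied a proof where the paper supplies only a citation, via the decoupling mechanism rather than efficient congruencing.
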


\subsection{Completion method}
\label{subsec:completion}

The following  bound is special case of~\cite[Lemma~3.2]{ChSh2}, and  for completeness we give a proof here. 
\begin{lemma}\label{lem:control} 
For $\vx\in \Tor$ and $1\le M\le N$ we have 
$$
S_{d}( \vx; M) \ll W_{d}( \vx; N),
$$
where 
$$
W_{d}( \vx; N)= \sum_{h=1}^{N} \frac{1}{h} \left| \sum_{n=1}^{N}   \e(h n/N) \e\(x_1n+\ldots+x_dn^d \) \right|.
$$
\end{lemma}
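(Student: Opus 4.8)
The plan is to use the \emph{completion method}: one detects the truncation $1\le n\le M$ inside the full range $1\le n\le N$ by means of additive characters modulo $N$, at the cost of a sum over frequencies $h$. First I would invoke the orthogonality relation on $\Z/N\Z$, namely that for every integer $n$ with $1\le n\le N$ and every $1\le M\le N$ one has
$$
\mathbf{1}_{1\le n\le M}=\frac{1}{N}\sum_{h=1}^{N}\(\sum_{m=1}^{M}\e(-hm/N)\)\e(hn/N).
$$
Inserting this into $S_d(\vx;M)=\sum_{n=1}^{N}\mathbf{1}_{1\le n\le M}\,\e\(x_1n+\ldots+x_dn^d\)$ and interchanging the order of summation yields the exact identity
$$
S_d(\vx;M)=\frac{1}{N}\sum_{h=1}^{N}c_h\,T_h(\vx),
\qquad
T_h(\vx)=\sum_{n=1}^{N}\e(hn/N)\,\e\(x_1n+\ldots+x_dn^d\),
$$
where $c_h=\sum_{m=1}^{M}\e(-hm/N)$ and $T_h(\vx)$ is precisely the complete twisted sum appearing in $W_d(\vx;N)$.

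Second, I would estimate the geometric coefficients $c_h$. Summing the geometric progression gives $|c_h|\le\min\{M,\ 1/(2\|h/N\|)\}$, where $\|t\|$ denotes the distance from $t$ to the nearest integer; for $1\le h\le N-1$ one has $N\|h/N\|=\min\{h,N-h\}$, so that $\tfrac1N|c_h|\ll 1/\min\{h,N-h\}$. After applying the triangle inequality to the identity above, the target reduces to the weighted comparison
$$
\frac{1}{N}\sum_{h=1}^{N}|c_h|\,|T_h(\vx)|\ll\sum_{h=1}^{N}\frac1h\,|T_h(\vx)|=W_d(\vx;N),
$$
which for the small frequencies $1\le h\le N/2$ is immediate from $\tfrac1N|c_h|\ll 1/h$.

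The main obstacle — and the step that genuinely carries the content — is the correct bookkeeping of the coefficient attached to the frequency $h\equiv 0\pmod N$ (that is, $h=N$), together with the high frequencies $N/2<h<N$ lying near it. For $h=N$ the coefficient is $c_N=M$, so this term carries the full sum $T_N(\vx)=S_d(\vx;N)$ with weight of order $M/N$ rather than $1/N$, and for $h=N-j$ with $j$ small the bound $\tfrac1N|c_h|\approx 1/(2j)$ is far larger than the nominal weight $1/h\approx 1/N$. I therefore expect the delicate point to be matching these large-frequency coefficients against the weights of $W_d(\vx;N)$, presumably by folding the reflection $h\mapsto N-h$ so that frequencies close to the multiple of $N$ are charged to small-index terms, and by verifying that the complete sum is absorbed; this is exactly where the precise form of the weights $1/h$ (and the hypothesis $M\le N$) must be used, and I anticipate the bulk of the argument to consist of making this accounting rigorous rather than in the harmonic-analytic input itself.
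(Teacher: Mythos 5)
Your setup is exactly the paper's: the same completion identity
$$
S_d(\vx;M)=\frac{1}{N}\sum_{h=1}^{N}c_h\,T_h(\vx),
\qquad
c_h=\sum_{k=1}^{M}\e(-hk/N),
\quad
T_h(\vx)=\sum_{n=1}^{N}\e(hn/N)\e\(x_1n+\ldots+x_dn^d\),
$$
and the same geometric-series estimate, which the paper quotes from~\cite{IwKow} in the form $\sum_{k=1}^{M}\e(hk/N)\ll N/\min\{h,N+1-h\}$. Up to that point you have reproduced the paper's argument, and both computations give
$$
|S_d(\vx;M)|\ll\sum_{h=1}^{N}\frac{1}{\min\{h,N+1-h\}}\,\left|T_h(\vx)\right|.
$$
However, the step you deferred --- absorbing the frequencies $N/2<h\le N$ into the weights $1/h$ --- is not a matter of careful bookkeeping: it cannot be done. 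Your proposed folding $h\mapsto N-h$ fails because replacing $\e(hn/N)$ by $\e(-hn/N)$ conjugates the additive twist but not the Weyl phase, so $|T_{N-h}(\vx)|$ is in general unrelated to $|T_h(\vx)|$. Worse, the asserted inequality $S_d(\vx;M)\ll W_d(\vx;N)$ is false as stated: take $\vx=(1/N,0,\ldots,0)$ and $M=\fl{N/2}$. Then $T_h(\vx)=\sum_{n=1}^{N}\e((h+1)n/N)$ vanishes for every $h=1,\ldots,N$ except $h=N-1$, where it equals $N$; hence $W_d(\vx;N)=N/(N-1)\le 2$, while $|S_d(\vx;M)|=\left|\sum_{n=1}^{M}\e(n/N)\right|\gg N$.

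So your instinct about where the content lies was exactly right, and your attempt stops at the precise point where the paper's own proof goes wrong: its final line silently replaces $1/\min\{h,N+1-h\}$ by $1/h$, which is invalid for $h>N/2$ (termwise it is off by a factor of order $N$, and the example above shows that no rearrangement of terms can rescue it). The harmless repair is to define $W_d(\vx;N)$ with the symmetric weights $1/\min\{h,N+1-h\}$ (equivalently, to add to $W_d(\vx;N)$ the same sum with $h$ replaced by $N+1-h$), which is what both computations actually prove. Since $\sum_{h=1}^{N}1/\min\{h,N+1-h\}\ll\log N$, the corrected $W_d$ still admits the representation~\eqref{eq:W} with coefficients obeying~\eqref{eq:bn}, and everything downstream --- Lemma~\ref{lem:cont}, Lemma~\ref{lem:counting}, Corollaries~\ref{cor:key} and~\ref{cor:largebox}, and hence Theorem~\ref{thm:dim} --- uses only those two properties, so the paper's main results are unaffected. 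But as a blind proof of the lemma as literally stated, your attempt has a genuine gap, and the missing step is one that would fail.
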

\begin{proof} For $\vx\in \Tor$ and $n\in \N$ denote
$$
f(n)=x_1 n+\ldots + x_dn^d.
$$

Observe that by the orthogonality 
$$
\frac{1}{N}\sum_{h=1}^{N}\sum_{k=1}^{M} \e\left(h(n-k)/N\right)=
  \begin{cases}
   1 & n=1, \ldots, M, \\
  0 & \text{otherwise}.
  \end{cases}
$$
We also note that for $1 \le h,M \le N$ we have
$$
  \sum_{k=1}^{M}\e\left(hk/N\right )  \ll \frac{N}{\min\{h, N+1 - h\}},
$$
see~\cite[Equation~(8.6)]{IwKow}.  
It follows that 
\begin{align*}
S_d(\vx; M)
&=\sum_{n=1}^{N}\e(f(n)) \frac{1}{N}\sum_{h=1}^{N}\sum_{k=1}^{M} \e\left(h(n-k)/N\right)\\
&=\frac{1}{N} \sum_{h=1}^{N}  \sum_{k=1}^{M}\e\left(-hk/N\right ) \sum_{n=1}^{N} \e(f(n))\e\left (hn/N\right ) \\
&\ll \sum_{h=1}^{N} \frac{1}{\min\{h, N+1 - h\}} \left |\sum_{n=1}^{N} \e\left (hn/N\right )\e(f(n)) \right |\\
&\ll \sum_{h=1}^{N} \frac{1}{h} \left |\sum_{n=1}^{N} \e\left (hn/N\right )\e(f(n)) \right |,
\end{align*}  
which finishes the proof.
\end{proof}

Observe that for any $N$ there exists a sequence $b_N(n), n=1, \ldots, N$ such that 
\begin{equation}
\label{eq:bn}
b_N(n) \ll \log N, \qquad n=1, \ldots, N, 
\end{equation}
and $W_{d}( \vx; N)$ can be written as 
\begin{equation}
\label{eq:W}
W_{d}( \vx; N)=\sum_{n=1}^N b_N(n)  \e(x_1n+\ldots + x_d n^d).
\end{equation}

From Lemma~\ref{lem:control} we immediately obtain:

\begin{cor}
\label{cor:key}
Let $ 0<\alpha<1$ and $N_i=2^i, i \in \N$. Using above notation for any  $\eta>0$ we have 
$$
\cE_{d, \alpha+\eta} \subseteq \{\vx\in \Tor:~|W_{d}(\vx; N_i)|\ge N_i^{\alpha} \text{ for infinitely many } i\in \N\}.
$$
\end{cor}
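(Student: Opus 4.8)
The plan is to unwind the definition of the exceptional set and feed each ``witness'' length $M$ into Lemma~\ref{lem:control}, choosing the parameter $N$ there to be the dyadic number just above $M$. First I would fix an arbitrary $\vx\in\cE_{d,\alpha+\eta}$; by definition there are infinitely many $M\in\N$ with $|S_d(\vx;M)|\ge M^{\alpha+\eta}$. For each such $M$ let $i$ be the least index with $M\le N_i$, so that (for all large $M$) we have $N_i<2M$, i.e.\ $M>N_i/2$. Since the set of admissible $M$ is infinite and hence unbounded, the associated indices $i=i(M)$ range over an infinite set as well, which is what will ultimately supply the ``infinitely many $i$'' in the conclusion.

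Next I would invoke Lemma~\ref{lem:control} with this choice of $N=N_i$, which is legitimate because $M\le N_i$, to get $|S_d(\vx;M)|\ll|W_d(\vx;N_i)|$. Combining this with the hypothesis $|S_d(\vx;M)|\ge M^{\alpha+\eta}$ and the rounding bound $M>N_i/2$ yields
$$
|W_d(\vx;N_i)|\gg M^{\alpha+\eta}\gg N_i^{\alpha+\eta}=N_i^{\eta}\cdot N_i^{\alpha},
$$
where the implied constants depend only on $d$, $\alpha$ and $\eta$. The final step is to absorb the implied constant into the surplus exponent: since $N_i\to\infty$ as $i\to\infty$, the factor $N_i^{\eta}$ eventually exceeds the reciprocal of the (fixed) implied constant, so for all sufficiently large indices $i$ in our infinite set we obtain $|W_d(\vx;N_i)|\ge N_i^{\alpha}$. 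Discarding the finitely many small indices still leaves infinitely many, placing $\vx$ in the right-hand set and giving the desired inclusion.

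There is no genuinely hard step here; this is a routine consequence of Lemma~\ref{lem:control}. The only point demanding a little care is the bookkeeping with the implied constant and the dyadic rounding, and this is precisely why the statement trades $\alpha+\eta$ on the left for $\alpha$ on the right: the extra exponent $\eta$ is exactly the slack that lets both the constant from Lemma~\ref{lem:control} and the factor lost in passing from $M$ to the nearby power of two $N_i$ be swallowed as $i\to\infty$.
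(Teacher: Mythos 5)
Your proof is correct and is exactly the argument the paper intends: the paper derives Corollary~\ref{cor:key} ``immediately'' from Lemma~\ref{lem:control} without writing out details, and your dyadic rounding $N_{i-1}<M\le N_i$ together with using the slack exponent $\eta$ (via $N_i^{\eta}\to\infty$) to absorb both the implied constant and the factor lost in passing from $M$ to $N_i$ is precisely the routine verification being suppressed. The only point worth noting --- that the indices $i(M)$ form an infinite set even though $M\mapsto i(M)$ need not be injective --- you handle correctly.
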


Thus for the purpose of estimate the set $\cE_{d, \alpha}$ it is sufficient to know the size of the set
$$
\{\vx\in \Tor:~|W_{d}(\vx; N)|\ge N^{\alpha} \}, 
$$
which we investigate in Section~\ref{subsec:distribution} below.

\subsection{Distribution of large values of exponential sums}
\label{subsec:distribution}
 
We first remark that the results in this subsection are  special forms of~\cite{ChSh2}, see also~\cite[Lemma~2.1]{Wool3}.  
For completeness we give proofs for these special cases.

For $\vu\in \R^d$ and $\bzeta=(\zeta_1, \ldots, \zeta_d)$ with  $\zeta_j>0$, $j=1, \ldots, d$, we define the  $d$-dimensional rectangle (or box) with the centre $\vu$ and the side lengths $2\bzeta$ by 
$$
\cR(\vu, \bzeta)=[u_1-\zeta_1, u_1+\zeta_1)\times \ldots \times [u_d-\zeta_d, u_d+\zeta_d).
$$

In analogue of~\cite[Lemma~2.1]{Wool3} and~\cite[Lemma~3.5]{ChSh2} we obtain:

\begin{lemma} 
\label{lem:cont} 
Let $0<\alpha<1$ and let $\varepsilon>0$ be  sufficiently small. If 
$|W_{d}(\vx; N)|\ge N^{\alpha}$ for  some $\vx\in \Tor$,  then 
$$
|W_{d}( \vy; N)|\ge  N^{\alpha}/2
$$
holds for  any $\vy\in \cR(\vx, \bzeta)$ provided  that $N$ is large enough and 
$$
0<\zeta_j\le N^{\alpha-j-1-\eps},  \qquad j =1, \ldots, d.
$$
\end{lemma}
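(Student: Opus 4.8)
The plan is to exploit the explicit trigonometric polynomial structure of $W_d(\vy; N)$ given in~\eqref{eq:W}. Writing $\vy = \vx + \vt$ with $\vt = (t_1, \ldots, t_d)$ where $|t_j| \le \zeta_j$, I would compare $W_d(\vy; N)$ term-by-term against $W_d(\vx; N)$. Since
$$
W_d(\vy; N) = \sum_{n=1}^N b_N(n)\, \e\(x_1 n + \ldots + x_d n^d\) \e\(t_1 n + \ldots + t_d n^d\),
$$
the difference $W_d(\vx; N) - W_d(\vy; N)$ is controlled by how far each phase factor $\e\(t_1 n + \ldots + t_d n^d\)$ is from $1$. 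The key elementary estimate is $|\e(\theta) - 1| = |2\sin(\pi\theta)| \le 2\pi|\theta|$, so each term contributes at most $2\pi |b_N(n)|\, |t_1 n + \ldots + t_d n^d|$.

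First I would bound the phase perturbation uniformly over $1 \le n \le N$. Using $|t_j| \le \zeta_j \le N^{\alpha - j - 1 - \eps}$ and $n^j \le N^j$, each summand $|t_j n^j| \le N^{\alpha - j - 1 - \eps} N^j = N^{\alpha - 1 - \eps}$, so the total phase satisfies
$$
|t_1 n + \ldots + t_d n^d| \le d\, N^{\alpha - 1 - \eps}.
$$
Combining this with the bound $b_N(n) \ll \log N$ from~\eqref{eq:bn} and summing over the $N$ terms, I obtain
$$
|W_d(\vx; N) - W_d(\vy; N)| \le \sum_{n=1}^N |b_N(n)|\, |\e(t_1 n + \ldots + t_d n^d) - 1| \ll N \cdot \log N \cdot N^{\alpha - 1 - \eps} = N^{\alpha - \eps + o(1)}.
$$
Since $\eps > 0$ is fixed, the right-hand side is $o(N^\alpha)$, and in particular is bounded by $N^\alpha / 2$ once $N$ is large enough.

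The conclusion then follows by the reverse triangle inequality: from $|W_d(\vx; N)| \ge N^\alpha$ and the displayed perturbation bound,
$$
|W_d(\vy; N)| \ge |W_d(\vx; N)| - |W_d(\vx; N) - W_d(\vy; N)| \ge N^\alpha - N^\alpha/2 = N^\alpha/2,
$$
which is exactly the claim. The reasoning is essentially routine once the scaling of the $\zeta_j$ is pinned down, and there is no serious obstacle; the only point requiring care is verifying that the exponents $\alpha - j - 1 - \eps$ are precisely what makes every monomial $t_j n^j$ shrink like $N^{\alpha - 1 - \eps}$, so that the extra factor $N$ from summing and the $\log N$ from the weights are both absorbed by the spare factor $N^{-\eps}$. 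This is why the statement requires $\eps$ to be strictly positive and $N$ to be large.
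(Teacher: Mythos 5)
Your quantitative estimates coincide exactly with the paper's: each monomial obeys $\zeta_j n^j\le N^{\alpha-1-\eps}$, the phase perturbation is $\ll N^{\alpha-1-\eps}$ uniformly in $n\le N$, the accumulated error $\ll N\log N\cdot N^{\alpha-1-\eps}=N^{\alpha-\eps+o(1)}$ is beaten by $N^{\alpha}/2$ for large $N$, and the reverse triangle inequality finishes. But there is one step that is not literally correct as written: the identity
$$
W_d(\vy;N)=\sum_{n=1}^{N} b_N(n)\,\e\(x_1n+\ldots+x_dn^d\)\e\(t_1n+\ldots+t_dn^d\)
$$
silently uses the same coefficients $b_N(n)$ for $\vy$ as for $\vx$. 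Since
$$
W_d(\vx;N)=\sum_{h=1}^{N}\frac{1}{h}\left|T_h(\vx)\right|,\qquad T_h(\vx)=\sum_{n=1}^{N}\e(hn/N)\e\(x_1n+\ldots+x_dn^d\),
$$
contains absolute values inside the $h$-sum, the representation~\eqref{eq:W} comes from writing $|T_h|=\overline{\theta_h}T_h$ with unimodular phases $\theta_h$ that depend on the point, so $b_N(n)=\sum_{h=1}^{N}h^{-1}\overline{\theta_h}\,\e(hn/N)$ depends on $\vx$. Indeed no $\vx$-independent choice can exist: the right-hand side of~\eqref{eq:W} would then be a fixed trigonometric polynomial with zero mean over $\Tor$, while $W_d(\cdot\,;N)$ is nonnegative and not identically zero. (The paper's own wording of~\eqref{eq:W} invites this reading, so the slip is inherited rather than invented.)

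The gap is repairable without changing your computation, because only a one-sided comparison is needed. Fix the coefficients at $\vx$, so that $\sum_{n=1}^N b_N(n)\e\(x_1n+\ldots+x_dn^d\)=W_d(\vx;N)$, and observe that the triangle inequality over $h$ gives $\left|\sum_{n=1}^N b_N(n)\e\(y_1n+\ldots+y_dn^d\)\right|\le W_d(\vy;N)$ for every $\vy$ --- which is exactly the direction the lemma requires; your perturbation bound then yields $W_d(\vy;N)\ge N^{\alpha}-O(N^{\alpha-\eps}\log N)\ge N^{\alpha}/2$. The paper avoids the issue altogether by comparing the inner sums for each $h$ separately, via
$$
\bigl|\,|T_h(\vx)|-|T_h(\vy)|\,\bigr|\le\left|T_h(\vx)-T_h(\vy)\right|\ll\sum_{n=1}^{N}\sum_{j=1}^{d}\zeta_j n^j\le N^{\alpha-\eps/2},
$$
and extracting the logarithm from $\sum_{h\le N}h^{-1}\ll\log N$ rather than from~\eqref{eq:bn}; quantitatively the two arguments are identical.
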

\begin{proof} For any $h=1, \ldots, N$ we have 
\begin{align*}
\sum_{n=1}^{N}   \e(h n/N) & \left(\e\(x_1n+\ldots+x_dn^d \) - \e\(y_1n+\ldots+y_dn^d \) \right)\\
& \quad\quad\quad\quad\quad\quad \ll \sum_{n=1}^{N} \sum_{j=1}^d \zeta_j n^j\le N^{\alpha-\eps/2}.
\end{align*}
The last estimate holds for all large enough $N$. By Lemma~\ref{lem:control} we obtain 
$$
|W_{d}( \vx; N)-W_d(\vy; N)| \ll N^{\alpha-\eps/2} \log N \le N^{\alpha}/2,
$$
which holds for all large enough $N$ and gives the result.
\end{proof}

In analogue of~\cite[Lemma~3.7]{ChSh2}  from Lemmas~\ref{lem:control} and~\ref{lem:cont} we obtain:

\begin{lemma}
\label{lem:counting}
Let $0<\alpha<1$ and $\eps>0$ be a small parameter. For each $j=1, \ldots, d$ let 
$$
\zeta_j=1/ \rf{N^{j+1+\eps-\alpha}}.
$$
We divide $\Tor$ into 
$$
U = \prod_{j=1}^d \zeta_j^{-1}
$$ 
boxes of the type
$$
[n_1\zeta_1, (n_1+1)\zeta_1)\times \ldots \times [n_d\zeta_d, (n_d+1)\zeta_d),
$$
where $n_j=0, \ldots, 1/\zeta_j-1$,  $j=1, \ldots, d$. Let $\fR$ be the collection of these boxes, and  
$$
\widetilde \fR=\{\cR \in \fR:~\exists\, \vx\in \cR \text{ with } |W_d(\vx; N)|\ge N^{\alpha}\}.
$$
Then one has
$$
\# \widetilde \fR
 \le U N^{s(d)(1-2\alpha)+o(1)}.
$$
\end{lemma}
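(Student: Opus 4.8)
The plan is to count the boxes in $\widetilde{\fR}$ by a second-moment (integration) argument, using the weighted Vinogradov mean value theorem of Lemma~\ref{lem:Wooley}. The key observation is that each box $\cR \in \widetilde{\fR}$ contains a point $\vx$ where $|W_d(\vx;N)| \ge N^\alpha$, and by the continuity Lemma~\ref{lem:cont} the large value $|W_d(\vy;N)| \ge N^\alpha/2$ then persists on a sub-box of $\cR$ of comparable side lengths (the side lengths $\zeta_j = 1/\lceil N^{j+1+\eps-\alpha}\rceil$ chosen here are exactly matched to the admissible range $\zeta_j \le N^{\alpha-j-1-\eps}$ from Lemma~\ref{lem:cont}). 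Hence a positive proportion of the Lebesgue measure of each such box carries the value $|W_d| \ge N^\alpha/2$, which converts a count of boxes into a lower bound for an integral of $|W_d|^{2s(d)}$ over the union of these boxes.

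More precisely, first I would write, for each $\cR \in \widetilde{\fR}$,
$$
\int_{\cR} |W_d(\vy;N)|^{2s(d)}\, d\vy \ge (N^\alpha/2)^{2s(d)} \lambda(\cR') \gg N^{2\alpha s(d)} \lambda(\cR),
$$
where $\cR' \subseteq \cR$ is the sub-box on which the large value survives and $\lambda(\cR) = \prod_{j=1}^d \zeta_j = U^{-1}$. Summing over all $\cR \in \widetilde{\fR}$ and using that the boxes are disjoint gives
$$
\# \widetilde{\fR} \cdot N^{2\alpha s(d)} U^{-1} \ll \int_{\Tor} |W_d(\vy;N)|^{2s(d)}\, d\vy.
$$
Next I would bound the right-hand integral. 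Recalling from~\eqref{eq:W} that $W_d(\vy;N) = \sum_{n=1}^N b_N(n)\, \e(y_1 n + \ldots + y_d n^d)$ with weights satisfying $b_N(n) \ll \log N$ by~\eqref{eq:bn}, Lemma~\ref{lem:Wooley} applies directly to yield
$$
\int_{\Tor} |W_d(\vy;N)|^{2s(d)}\, d\vy \le N^{o(1)} \Bigl(\sum_{n=1}^N |b_N(n)|^2\Bigr)^{s(d)} \le N^{o(1)} \bigl(N (\log N)^2\bigr)^{s(d)} = N^{s(d)+o(1)}.
$$

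Combining the two displays gives $\# \widetilde{\fR} \ll U \cdot N^{s(d)+o(1)} \cdot N^{-2\alpha s(d)} = U N^{s(d)(1-2\alpha)+o(1)}$, which is exactly the claimed bound. \textbf{The main obstacle} I anticipate is the bookkeeping at the interface between Lemma~\ref{lem:cont} and the box decomposition: one must check that the surviving sub-box $\cR'$ on which $|W_d(\vy;N)| \ge N^\alpha/2$ indeed has measure comparable to $\lambda(\cR)$, rather than being much smaller. Since the admissible perturbation range in Lemma~\ref{lem:cont} is $\zeta_j \le N^{\alpha-j-1-\eps}$ and the chosen box side lengths are $1/\lceil N^{j+1+\eps-\alpha}\rceil \approx N^{\alpha-j-1-\eps}$, the two scales match up to absolute constants, so a point with a large value propagates its size across a fixed fraction of the box; but the constant must be tracked carefully so that the factor absorbed into $N^{o(1)}$ does not secretly depend on the number of boxes. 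The remaining steps — the application of Lemma~\ref{lem:Wooley} and the summation over disjoint boxes — are routine once this measure comparison is established.
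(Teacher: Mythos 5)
Your proposal is correct and is essentially the paper's own proof: the large value propagates to every point of each box in $\widetilde\fR$ via Lemma~\ref{lem:cont}, disjointness of the boxes converts $\#\widetilde\fR$ into a lower bound $N^{2\alpha s(d)}\,\#\widetilde\fR\,\prod_{j=1}^d\zeta_j$ for the moment integral, and Lemma~\ref{lem:Wooley} applied to the weights $b_N(n)\ll\log N$ from~\eqref{eq:bn} and~\eqref{eq:W} bounds that integral by $N^{s(d)+o(1)}$. The only remark worth making is that your anticipated obstacle is vacuous: since the chosen side lengths satisfy $\zeta_j=1/\rf{N^{j+1+\eps-\alpha}}\le N^{\alpha-j-1-\eps}$, any two points of a partition box differ by less than the admissible perturbation in each coordinate, so Lemma~\ref{lem:cont} yields $|W_d(\vy;N)|\ge N^{\alpha}/2$ on the \emph{entire} box, i.e.\ $\cR'=\cR$, and no measure comparison or constant tracking is needed.
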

\begin{proof}
Let $\cR\in \fR$.  By Lemma~\ref{lem:cont}  if $|W_{d}( \vx; N)|\ge N^{\alpha}$ for some $\vx\in \cR$, 
then   for any $\vy\in \cR$ we have $|W_{d}(\vy; N)|\ge N^{\alpha}/2$. 
Combining with Lemma~\ref{lem:Wooley}  and~\eqref{eq:bn}, ~\eqref{eq:W} we derive   
$$
N^{ 2s(d) \alpha} \# \widetilde \fR\prod_{j=1}^{d}\zeta_j \ll \int_{\Tor} |W_{d}( \vx; N)|^{2s(d)}\, d\vx\le N^{s(d)+o(1)},
$$
which yields  the desired bound.  
\end{proof}

Note that the above bound of $\# \widetilde \fR$ is nontrivial when $1/2<\alpha<1$. 

From Corollary~\ref{cor:key} and Lemma~\ref{lem:counting} 
we formulate  the following Corollary~\ref{cor:largebox}  for the convenience of our applications.  

\begin{cor}  
\label{cor:largebox}
Let $0<\alpha<1$ and $N_i=2^i, i \in \N$. Then for any  $\eta>0$  we have 
$$
\cE_{d, \alpha+\eta} \subseteq \bigcap_{k=1}^{\infty}\bigcup_{i=k}^{\infty} \bigcup_{\cR\in \fR(i)} \cR, 
$$
where each $\cR$ of $\fR(i)$ has the side length $\bzeta=(\zeta_1, \ldots, \zeta_d)$ such that 
$$
\zeta_j=1/ \rf{N_i^{j+1+\eps-\alpha}}, \quad j=1, \ldots, d,
$$
and furthermore 
$$
\#  \fR(i)\le N_i^{s(d)-2\alpha s(d)}\prod_{j=1}^{d} \zeta_j^{-1}\le N_i^{2s(d)(1-\alpha)+d(1-\alpha)+d\eps+o(1)}.
$$
\end{cor}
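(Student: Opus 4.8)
The plan is to obtain Corollary~\ref{cor:largebox} as a direct repackaging of Corollary~\ref{cor:key} and Lemma~\ref{lem:counting}; almost nothing analytic remains, and the real work is to assemble the single-scale covering into a limsup set and then to carry out the exponent bookkeeping. First I would apply Corollary~\ref{cor:key}, which for $N_i=2^i$ already gives the containment $\cE_{d,\alpha+\eta}\subseteq\{\vx\in\Tor:~|W_d(\vx;N_i)|\ge N_i^\alpha \text{ for infinitely many } i\}$. Rewriting the ``infinitely many $i$'' condition in the standard way turns the right-hand side into the limsup set $\bigcap_{k=1}^\infty\bigcup_{i=k}^\infty\cA_i$, where $\cA_i=\{\vx\in\Tor:~|W_d(\vx;N_i)|\ge N_i^\alpha\}$.

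Next, for each fixed $i$ I would invoke Lemma~\ref{lem:counting} with $N=N_i$ and with the prescribed side lengths $\zeta_j=1/\rf{N_i^{j+1+\eps-\alpha}}$, and set $\fR(i)=\widetilde\fR$ to be the family of grid boxes it produces. Every point of $\cA_i$ lies in exactly one grid box, and that box automatically belongs to $\widetilde\fR$ since it then contains a point at which $|W_d(\cdot;N_i)|\ge N_i^\alpha$; hence $\cA_i\subseteq\bigcup_{\cR\in\fR(i)}\cR$. Substituting this covering into the limsup expression yields the asserted containment $\cE_{d,\alpha+\eta}\subseteq\bigcap_{k=1}^\infty\bigcup_{i=k}^\infty\bigcup_{\cR\in\fR(i)}\cR$, with the side lengths exactly as claimed.

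The count is then purely arithmetic. Lemma~\ref{lem:counting} gives $\#\fR(i)\le U N_i^{s(d)(1-2\alpha)+o(1)}$ with $U=\prod_{j=1}^d\zeta_j^{-1}$, which is the first displayed bound. For the second I would estimate $\zeta_j^{-1}=\rf{N_i^{j+1+\eps-\alpha}}\ll N_i^{j+1+\eps-\alpha}$ (each ceiling costing only a bounded factor, absorbed into $N_i^{o(1)}$), multiply over $j$, and add the exponent $s(d)(1-2\alpha)$. Using $\sum_{j=1}^d(j+1)=s(d)+d$ together with $s(d)=d(d+1)/2$, the total exponent collapses to $(d^2+2d)(1-\alpha)+d\eps+o(1)$, which is precisely $2s(d)(1-\alpha)+d(1-\alpha)+d\eps+o(1)$.

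I do not expect a genuine obstacle here: the statement merely bundles two results already established, so the points requiring attention are organisational rather than analytic. The one mildly delicate step is checking that the ceilings in the definition of $\zeta_j$, together with the positivity of each exponent $j+1+\eps-\alpha$ (which holds since $j\ge 1$ and $\alpha<1$), guarantee that the grid of Lemma~\ref{lem:counting} is an honest partition of $\Tor$ into $U$ boxes, so that $U$ simultaneously counts the boxes and equals $\prod_{j=1}^d\zeta_j^{-1}$; once this is verified the two exponent expressions coincide identically.
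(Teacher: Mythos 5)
Your proposal is correct and is precisely the argument the paper intends: the paper states Corollary~\ref{cor:largebox} as an immediate consequence of Corollary~\ref{cor:key} and Lemma~\ref{lem:counting}, which is exactly your assembly of the limsup set, the grid covering with $\fR(i)=\widetilde\fR$, and the exponent bookkeeping $\sum_{j=1}^d(j+1+\eps-\alpha)=s(d)+d(1-\alpha)+d\eps$. Your added care about the ceilings making $1/\zeta_j$ integral (so the grid genuinely partitions $\Tor$) is a detail the paper leaves implicit but handles identically.
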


\section{Proof of Theorem~\ref{thm:dim}}
We start from some auxiliary results. First, we adapt the definition of the \emph{singular value  function} from~\cite[Chapter~9]{Falconer} to the following. 

\begin{definition} Let $\cR\subseteq \R^d$ be a rectangle with side lengths 
$$
r_1\ge \ldots \ge r_d.
$$ 
For $0< t \le d$  we set  $$
\varphi_{0, t}(\cR)=r_1^{t},
$$
and for $k=1, \ldots, d-1$ we define 
$$
\varphi_{k, t}(\cR)=r_1\ldots r_{k}r_{k+1}^{t-k}.
$$
\end{definition}

Note that  for a rectangle $\cR\subseteq \R^2$ with the side length $r_1\ge r_2$ we have 
$$
\varphi_{k, t} (\cR)=
  \begin{cases}
   r_1^t  & \text{ for } k=0, \\
   r_1 r_2^{t-1} &  \text{ for }  k=1.
  \end{cases}
$$

\begin{remark}
\label{rem:rem}
The notation  $\varphi_{k, t}(\cR)$ roughly means that we can cover the rectangle $\cR$ by 
about (up to a constant factor) 
$$
\frac{r_1}{r_{k+1}}\ldots \frac{r_k}{r_{k+1}}
$$
balls  
of radius
$ r_{k+1}$, and hence this leads to the term 
$$
\varphi_{k, t}(\cR) = \frac{r_1}{r_{k+1}}\ldots \frac{r_k}{r_{k+1}} r_{k+1}^t
$$
in the expression for the Hausdorff measure with the parameter $t$ 
(again up to a constant factor which does not affect our results). 
\end{remark}

From the definition of the Hausdorff dimension, using the above notation,  we have the following inequality 
\begin{equation}\label{eq:upperbound}
\begin{aligned}
\dim \cE_{d, \alpha+\eps} \le \inf  \{ t>0:~\sum_{i=1}^{\infty} &\sum_{\cR\in \fR(i)}  \varphi_{k, t}(\cR)<\infty, \\
& \quad  \text{ for some } \ k=0, \ldots, d-1  \}.
\end{aligned}
\end{equation}

Now we turn to the proof of Theorem~\ref{thm:dim}.  For  $k=1, \ldots, d-1$ and $0<t\le d$ we have 
\begin{equation} 
\begin{split}
\label{eq:dimR0}
\sum_{\cR\in \fR(i)} &\varphi_{k, t}(\cR)=\# \fR(i)\zeta_{k+1}^{t-k} \prod_{j=1}^{k} \zeta_j   \\
&\le N_i^{2s(d)(1-\alpha)+d(1-\alpha)+d\eps+o(1)}\\
& \qquad\qquad \qquad \times\left (N_i^{\alpha-1-\eps-(k+1)}\right )^{t-k} \prod_{j=1}^{k} N_i^{\alpha-j-1-\eps}\\
&\le N_i^{2s(d)(1-\alpha)+d(1-\alpha)+d\eps+(t-k)(\alpha-k-2-\eps )+k(\alpha-1-\eps)-s(k)+o(1)}.
\end{split}
\end{equation}
Here and in the following we denote 
$$
s(k)=\frac{k(k+1)}{2}.
$$ 
We remark that~\eqref{eq:dimR0}  also holds for the case $k=0$, in which we have  $s(k)=0$.  
To be precise for $k=0$ we have 
$$
\sum_{\cR\in \fR(i)} \varphi_{0, t}(\cR)\le N_i^{2s(d)(1-\alpha)+d(1-\alpha)+d\eps+t(\alpha-2-\eps )+o(1)}.
$$
Applying \eqref{eq:upperbound} we conclude that  
$$
\dim \cE_{d, \alpha+\eta} \le t
$$
provided that the parameters  $\alpha, \rho,  k, t$ satisfy the following  further condition
$$
2s(d)(1-\alpha)+d(1-\alpha)+(t-k)(\alpha-k-2 )+k(\alpha-1)-s(k)<0,
$$
which becomes 
$$
t>\frac{2s(d)(1-\alpha)+d(1-\alpha)+s(k)}{k+2-\alpha}.
$$
By the arbitrary choice of $\eta$ we finish the proof.


\section*{Acknowledgement}
%
%
%

This work was  supported   by ARC Grant~DP170100786.

\end{document}